\newtheorem{theorem}{Theorem}[section]
\theoremstyle{definition}
\newtheorem{lemma}[theorem]{Lemma}
\newtheorem{corollary}[theorem]{Corollary}
\newtheorem{example}[theorem]{Example}
\renewcommand{\setminus}{{\smallsetminus}}
\def\be{\begin{equation}}
\def\ee{\end{equation}}
\begin{document}

\title[Generators of the Steinberg Module]{Explicit generators of the Steinberg Module of the mapping class group} 

\author{Ingrid Irmer}
\address{SUSTech International Center for Mathematics\\
Southern University of Science and Technology\\Shenzhen, China
}
\address{Department of Mathematics\\
Southern University of Science and Technology\\Shenzhen, China
}
\email{ingridmary@sustech.edu.cn}

\thanks{
 {\em Mathematics Classification.} Primary 57K20. Secondary 57R70.
\newline
{\em Key words and phrases: Mapping class groups, Moduli space, Morse Theory.
}
}


\begin{abstract}A conjecture of Broaddus is proven, giving a simple characterisation of a representative of the unique orbit of the action of the mapping class group on the homology of Harvey's complex of curves for any genus surface. As an application, the kernel of the action of the mapping class group of a genus $g$ surface on the Steinberg module is shown to be trivial.
\end{abstract}

\maketitle

{\footnotesize
\tableofcontents
}


\section{Introduction}
\label{sec.intro}

It was shown in \cite{Harer}, Theorem 3.5, that the complex of curves, $\mathcal{C}_{g}$, of a closed, orientable topological surface $\mathcal{S}_{g}$ of genus $g\geq 2$ has the homotopy type of an infinite wedge of spheres of dimension $2g-2$. In \cite{Broaddus}, an algorithm for constructing a homotopically nontrivial sphere was given, and an explicit example was computed in genus 2. This paper proves a conjecture showing that a considerably simpler construction of spheres: ``Broaddus spheres'' - of which there is one in each genus - could be used. \\

The reduced homology group $\tilde{H}_{2g-2}\bigl(\mathcal{C}_{g};\mathbb{Z}\bigr)$ is also referred to as the Steinberg module $\mathrm{St}(\mathcal{S}_{g})$. Since there is a simplicial action of the mapping class group $\Gamma_{g}$ of $\mathcal{S}_{g}$ on $\mathcal{C}_{g}$, $\mathrm{St}(\mathcal{S}_{g})$ has the structure of a $\Gamma_{g}$-module. It was also shown in \cite{Broaddus} that $\mathrm{St}(\mathcal{S}_{g})$ is cyclic modulo the action of the mapping class group.\\

\begin{theorem}
\label{propaganda}
$\mathrm{St}(\mathcal{S}_{g})$ is generated by the mapping class group orbit of the Broaddus sphere of genus $g$.
\end{theorem}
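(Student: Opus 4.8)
The plan is to use Broaddus's theorem that $\mathrm{St}(\mathcal S_g)=\tilde H_{2g-2}(\mathcal C_g;\mathbb Z)$ is a cyclic $\mathbb Z[\Gamma_g]$-module, so that the statement reduces to showing that the class $[B_g]$ of the genus-$g$ Broaddus sphere is itself a generator. Since $\tilde H_{2g-2}(\mathcal C_g;\mathbb Z)$ is free abelian by Harer's theorem, this comes down to proving that $[B_g]\neq 0$ and that $[B_g]$ lies in no proper $\Gamma_g$-submodule, i.e.\ that $\mathbb Z[\Gamma_g]\cdot[B_g]$ exhausts $\mathrm{St}(\mathcal S_g)$. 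To make the latter tractable I would first fix an explicit finite presentation of $\mathrm{St}(\mathcal S_g)$ over $\mathbb Z[\Gamma_g]$: by Harer duality the Steinberg module is $H^{4g-5}_c$ of a cocompact $\Gamma_g$-equivariant spine of Teichm\"uller space, so a $\Gamma_g$-invariant (discrete) Morse function adapted to that spine --- or, more directly, to a contractible complex of filling configurations on $\mathcal S_g$ whose non-filling part carries $\mathcal C_g$ --- presents $\mathrm{St}(\mathcal S_g)$ as the cokernel of a coboundary map between the top two groups of compactly supported cochains. Each of these is finitely generated over $\mathbb Z[\Gamma_g]$, yielding a finite list of generating cycles together with the codimension-one cells that produce all relations among them.

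The core of the argument is then a reduction procedure, in the spirit of the treatment of Steinberg modules of arithmetic groups (Ash--Rudolph, Church--Farb--Putman): starting from this finite generating set, use the explicit curve configuration defining $B_g$ to rewrite every generator as a $\mathbb Z$-linear combination of $\Gamma_g$-translates of $[B_g]$. I expect this to run by induction on a complexity invariant of the curve systems involved --- numbers of components, geometric intersection numbers, or the combinatorial type of the corresponding Morse cell --- with each inductive step realised by one of a fixed finite list of surgery moves on curves, and with each such move shown to be expressible through $[B_g]$ by invoking a single codimension-one relation of the presentation, concretely a null-homotopy of an explicit $(2g-2)$-sphere obtained by sliding curves of the Broaddus configuration. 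The non-vanishing $[B_g]\neq 0$ is handled in parallel: verify that $B_g$ is genuinely a cycle (compute $\bdy B_g$ in $\mathcal C_g$ and check that it vanishes) and detect it either by pairing with a cocycle or by identifying it with a top critical cell of the Morse function, excluding the degenerate possibility $\mathbb Z[\Gamma_g]\cdot[B_g]=0$.

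The main obstacle --- and the place where the conjecture has genuine content --- is uniformity in the genus: the assertion is that one and the same sphere works for every $g$, so the reduction must be organised so that every generator flows down to the \emph{single} distinguished orbit represented by $B_g$, rather than to a $g$-dependent family of spheres that merely span $\mathrm{St}(\mathcal S_g)$ together. Proving this requires showing that the Morse function can be chosen with its top critical cells forming one $\Gamma_g$-orbit modulo cells killed by the relations, and that this orbit is the one carrying the Broaddus sphere; pinning down $\bdy B_g$, matching $B_g$ with that distinguished critical cell, and exhibiting the finite move-set that drives the induction is where essentially all of the technical work --- and all of the use of the simple characterisation of $B_g$ --- will be concentrated. (The advertised application, triviality of the kernel of $\Gamma_g$ acting on $\mathrm{St}(\mathcal S_g)$, would then follow by combining the fact that $[B_g]$ generates with faithfulness properties of the $\Gamma_g$-action on $\mathcal C_g$.)
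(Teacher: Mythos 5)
Your plan shares the paper's opening move---reduce via Broaddus's Theorem 4.2 (cyclicity of $\mathrm{St}(\mathcal S_g)$ modulo $\Gamma_g$) to showing that $[B_g]$ itself is a generator---but after that the two arguments diverge completely, and the divergence is not merely one of taste: the route you propose leaves the hardest step unsolved, as you yourself flag.

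The paper does not build a finite $\mathbb Z[\Gamma_g]$-presentation of $\mathrm{St}(\mathcal S_g)$ and does not run an Ash--Rudolph/Church--Farb--Putman style reduction. Instead, the technical core is Lemma~\ref{maintheorem}: the Broaddus sphere $q^\circ_g$ is $\Gamma_g$-equivariantly homotopic to a sphere $q_{C,g}$ obtained by intersecting $\partial\mathcal T_g^\delta$ with Schmutz's set of minima $\mathrm{Min}(C)$, which plays the role of the unstable manifold of the index-$(2g-1)$ critical point $p$ of the systole function from Example~\ref{Schmutzexample}. Once that identification is in hand, \emph{both} of the things you need---non-vanishing \emph{and} the fact that $[B_g]$ generates rather than being a proper multiple---are extracted from the symmetry group $G_C\cong D_{2g+2}$ of the critical configuration and its one-dimensional fixed-point set $\gamma\subset\mathcal T_g$. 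Non-vanishing: a bounding disk in $\mathcal C_g$ would give a $G_C$-equivariant homotopy of the ball $\mathcal B_g\subset\mathrm{Min}(C)$ rel boundary into $\partial\mathcal T_g^\delta$, but any such homotopy must move the critical point along $\gamma$, and the discreteness of $\gamma\cap\partial\mathcal T_g^\delta$ plus a gluing argument $\mathcal B_g^L\cup\mathcal B_g^R$ yields a contradiction. Primitivity: if $[q_g]$ were a proper multiple of some $[q_g^{\mathrm{prim}}]$, Nielsen realisation would force the stabiliser $G_C^{\mathrm{prim}}\supsetneq G_C$ to fix a single point of $\gamma$, producing an invariant ball of dimension strictly less than $\dim\mathrm{Min}(C)$, again a contradiction. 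None of this involves writing down relations or rewriting generators.

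The genuine gap in your proposal is exactly the obstacle you name but do not overcome. The Ash--Rudolph mechanism for arithmetic Steinberg modules rests on an explicit combinatorial apparatus (apartments, modular symbols, a well-founded complexity on integral vectors) that has no known analogue for $\mathcal C_g$: there is currently no $\Gamma_g$-equivariant cocompact Morse function on a spine of $\mathcal T_g$ with a classified set of top critical cells from which a reduction could be launched, and no proved ``surgery move'' calculus on filling curve systems that drives every class down to a single orbit. So ``expect this to run by induction on a complexity invariant'' is precisely the unproved content of the conjecture, not a lemma one can cite. Likewise, detecting $[B_g]\neq 0$ by ``pairing with a cocycle'' presupposes an explicit dual class, which the paper never produces; the actual detection is the fixed-point/$\gamma$ argument above. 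If you want your outline to become a proof, the missing input is exactly the geometric realisation of $B_g$ as $\partial(\mathrm{Min}(C)\cap\mathcal T_g^\delta)$ and the use of $G_C$'s fixed-point set---without some replacement for that step, the non-vanishing and the single-orbit generation both remain open.
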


A Poincar\'e duality group is a group whose homology and cohomology satisfy a duality property analogous to the Poincar\'e duality between the homology and cohomology of a compact manifold. This definition was generalised in \cite{BE} to the notion of a duality group. Informally, a duality group is a group whose homology and cohomology satisfy a property analogous to Poincar\'e duality, with a nontrivial ``dualising module'' taking the place of the orientation module. It was shown in Theorem 6.6 of \cite{Ivanov} that the mapping class group of a closed orientable surface is a virtual duality group, and that the dualising module is  $\mathrm{St}(\mathcal{S}_{g})$.\\

For genus $g$ at least 3, the center $Z_{g}$ of $\Gamma_{g}$ is trivial, as shown in Theorem 3.10 of \cite{FandM}. In Section 3.4 of \cite{FandM} it was shown that the center of $Z_{2}$ is generated by involutions that act trivially on $\mathcal{C}_{g}$. The  techniques of this paper can be used to relate the stabiliser subgroup of the action of $\Gamma_{g}$ on a generator of $\mathrm{St}(\mathcal{S}_{g})$.

\begin{corollary}
\label{kernel}
The kernel of the action of $\Gamma_{g}/Z_{g}$ on $\mathrm{St}(\mathcal{S}_{g})$ is trivial.
\end{corollary}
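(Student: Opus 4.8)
The plan is to deduce the Corollary from Theorem~\ref{propaganda} by analysing the stabiliser of the Broaddus sphere. Write $[B]\in\mathrm{St}(\mathcal{S}_{g})$ for the homology class of the genus-$g$ Broaddus sphere and set $H=\mathrm{Stab}_{\Gamma_{g}}([B])$. Since $\mathrm{St}(\mathcal{S}_{g})$ is generated over $\mathbb{Z}$ by the orbit $\{\psi\cdot[B]:\psi\in\Gamma_{g}\}$, a mapping class $\phi$ acts trivially on $\mathrm{St}(\mathcal{S}_{g})$ if and only if it fixes $\psi\cdot[B]$ for every $\psi$, i.e.\ if and only if $\psi^{-1}\phi\psi\in H$ for every $\psi$. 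Hence the kernel of the action equals $\bigcap_{\psi\in\Gamma_{g}}\psi H\psi^{-1}$, the normal core of $H$ in $\Gamma_{g}$. Because $Z_{g}$ acts trivially on $\mathcal{C}_{g}$ — trivially for $g\geq 3$ since $Z_{g}=1$, and by Section~3.4 of \cite{FandM} for $g=2$ — the action descends to $\Gamma_{g}/Z_{g}$, so it suffices to prove that this normal core is exactly $Z_{g}$.

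The first main step is a rigidity statement for $[B]$: I aim to show that $H$ is contained in the setwise stabiliser of the explicit curve system $C_{B}$ spanning the Broaddus subcomplex $|B|\subset\mathcal{C}_{g}$. The obstacle here is that a homology class is represented by many cycles, so a priori $\phi\in H$ only preserves $[B]$ modulo boundaries; one must use the Morse-theoretic description underlying the proof of Theorem~\ref{propaganda} to single out $|B|$ intrinsically among these representatives — for instance as the unique cycle of minimal support in the line $\mathbb{Z}[B]$, the remaining spheres in Harer's wedge decomposition involving strictly larger configurations. Granting this, any $\phi\in H$ carries $|B|$ to a minimal representative of $[B]$, hence back to $|B|$, and therefore permutes the vertices $C_{B}$. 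Consequently every $\phi$ in the kernel of the action preserves the curve system $\psi\cdot C_{B}$ setwise for all $\psi\in\Gamma_{g}$.

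The second step passes from this to triviality modulo the centre. The systems $\psi\cdot C_{B}$, as $\psi$ ranges over $\Gamma_{g}$, include filling systems of curves and are unconstrained in the sense that for any two distinct isotopy classes $c\neq c'$ one can choose $\psi$ with $c\in\psi\cdot C_{B}$ and $c'\notin\psi\cdot C_{B}$; hence a mapping class preserving all of them must fix every isotopy class of essential simple closed curve, and such a mapping class acts trivially on $\mathcal{C}_{g}$, so it lies in $Z_{g}$. This gives $\bigcap_{\psi}\psi H\psi^{-1}\subseteq Z_{g}$, and the reverse inclusion is immediate. (Equivalently, once $H$ is identified concretely — generated by the multitwists supported on $C_{B}$ together with the finite symmetry group of the configuration — one checks directly that no nontrivial multitwist and no nontrivial finite-order element lies in every conjugate $\psi H\psi^{-1}$, the intersection being $Z_{g}$.)

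I expect the rigidity claim of the second paragraph to be the crux: it requires that the particular sphere produced by the construction in Theorem~\ref{propaganda} be recoverable from its homology class, so that fixing $[B]$ forces fixing the underlying curve configuration rather than merely a coset of boundaries, and this is where the Morse function on moduli space and the minimality of the Broaddus generator must be invoked. A minor additional point is the genus-$2$ case, where the kernel of the $\Gamma_{2}$-action contains the hyperelliptic involution and is trivial only after passing to $\Gamma_{2}/Z_{2}$, precisely as the statement records.
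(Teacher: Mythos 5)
Your reduction of the kernel to the normal core $\bigcap_{\psi}\psi H\psi^{-1}$ of $H=\mathrm{Stab}_{\Gamma_g}([q_g])$ matches the paper's starting point, and the genus-2 caveat about $Z_2$ is correctly handled. However, the proposal leaves its central step — what you call the rigidity claim — entirely unproved, and this is exactly where the hard work lies. It is not at all clear that the Broaddus sphere $|B|$ is the unique minimal-support cycle in $\mathbb{Z}[B]$, and you offer no argument; absent that, the inference that $\phi\in H$ permutes the curve system $C_B$ does not go through, since $\phi$ may carry $|B|$ to a homologous but combinatorially unrelated sphere. You flag this yourself as ``the crux,'' so the proposal is really a plan with a hole in its load-bearing step.

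The paper sidesteps this problem by never trying to recover the cycle from the homology class directly. Instead it uses the Morse-theoretic identification from Lemma~\ref{maintheorem}: the Broaddus sphere is $\Gamma_g$-equivariantly identified with $\partial\bigl(\mathrm{Min}(C)\cap\mathcal{T}_g^{\delta}\bigr)$, and the primitivity argument in Theorem~\ref{blahblah} pins the stabiliser of $[q_g]$ down to $G_C$, the finite symmetry group of the critical point of $f_{sys}$, which is then shown to be the dihedral group $D_{2g+2}$ via the automorphism group of the $(2g+2)$-gon $K_{2g+2}$, with each reflection realised explicitly as a composition of two surface reflections. With $H\cong D_{2g+2}$ finite and concretely described, the normal core is killed by conjugating with a high power of a pseudo-Anosov: the conjugated stabiliser fixes a disjoint set of curves, so the intersection with $H$ is trivial. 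By contrast, your closing argument needs the ``unconstrained'' claim that for any $c\neq c'$ there is $\psi$ with $c\in\psi\cdot C_B$ and $c'\notin\psi\cdot C_B$, which is plausible but also asserted rather than proved. One further inaccuracy: you suggest $H$ might be ``generated by the multitwists supported on $C_B$ together with the finite symmetry group,'' but a Dehn twist along a curve of $C_B$ does not preserve the vertex-labelling multicurves $m\bigl(C\setminus c(v)\bigr)$ and so does not fix $[q_g]$; the stabiliser is the finite group $D_{2g+2}$, not an infinite extension of it by multitwists.
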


The systole function $f_{sys}:\mathcal{T}_{g}\rightarrow \mathbb{R}_{+}$ is the piecewise smooth map whose value at any point of $\mathcal{T}_{g}$ is the length of the systoles. The systole function is known to be a topological Morse function, \cite{Akrout}. These are defined in \cite{Morse}, and can be used to construct cell decompositions of topological spaces analogous to those of (smooth) Morse functions.\\

The central idea is that the Broaddus sphere can be described using the set of ``admissible boundary points'' of a set of minima, as defined in \cite{SchmutzMorse}. Informally, the sets of minima in this paper behave like ``unstable manifolds'' of critical points of $f_{sys}$. A set of minima, or a critical point, is labelled by a set of curves. The mapping class group maps sets of minima to sets of minima, and critical points to critical points. As this action is determined by the action of the mapping class group on curves, the action of the mapping class group on the Broaddus spheres is identical to the action on the sets of minima that they bound in $\mathcal{T}_{g}$. The Broaddus spheres are shown to represent nontrivial homology classes in $\mathrm{St}(\mathcal{S}_{g})$ by studying the way the fixed point sets of the action of the $\Gamma_{g}$ on the sets of minima restrict homotopies.\\

\textbf{Outline of the paper.} Section \ref{defns} first introduces some background and notations. For the benefit of readers who are not be familiar with Schmutz's sets of minima, a survey of their properties is given. The construction from \cite{Broaddus} of a generator of $\mathrm{St}(\mathcal{S}_{g})$, as well as a related example of a critical point due to Schmutz are given in \ref{Broaddussec}. The relationship between these ingredients is made clear in Section \ref{homotopysection}. The ``unstable manifold'' of the critical point in the form of a certain set of minima, is identified with the ball in Teichm\"uller space bounded by the sphere corresponding to Broaddus's conjectured generator of $\mathrm{St}(\mathcal{S}_{g})$. The vertices of this triangulated sphere are shown to correspond to particular boundary points of the set of minima. In Section \ref{finalproof} symmetries are used to show that the homology class of the sphere is nontrivial.


\section{Assumptions and Background}
\label{defns}

The orientable, closed, compact, connected topological surface of genus $g$ will be denoted by $\mathcal{S}_{g}$; when this surface is given a marked hyperbolic structure corresponding to a point in Teichm\"uller space, it will be denoted by $S_{g}$. \\

The Teichm\"uller space of $\mathcal{S}_{g}$ will be denoted by $\mathcal{T}_{g}$, the mapping class group by $\Gamma_{g}$, moduli space by $\mathcal{M}_{g}$, and Harvey's curve complex by $\mathcal{C}_{g}$. For convenience, the simplicial complex $\mathcal{C}_{g}$ will be confused with its geometric realisation.\\

A curve is a homotopy class of embeddings of $S^{1}$ into $\mathcal{S}_{g}$. When there is no possibility of misunderstanding, the word curve will also refer to a particular representative of the homotopy class, such as a geodesic. A set of curves is said to fill $\mathcal{S}_{g}$ if the complement of a set of representatives in minimal positition cuts the surface into polygons.\\

A curve $c$ determines an analytic map $L(c):\mathcal{T}_{g}\rightarrow \mathbb{R}_{+}$ whose value at any point is given by the length of the geodesic representative of $c$. A length function $L(A, C)$ is a linear combination of such functions with positive coefficients where $C$ is a finite set of $|C|$ curves, and $A=(a_{1}, \ldots, a_{|C|})\in \mathbb{R}^{|C|}_{+}$ the coefficients. Length functions satisfy many convexity properties, for example they are strictly convex along Weil-Petersson geodesics, \cite{Wolpert}. The Weil-Petersson metric will be assumed whenever a metric is needed on $\mathcal{T}_{g}$. \\

The systoles of $S_{g}$ is the set of shortest curves; this is always finite. The set of points in $\mathcal{T}_{g}$ at which the set of systoles is exactly $C$ will be denoted by $\mathrm{Sys}(C)$. The Thurston spine is the CW complex contained in $\mathcal{T}_{g}$ consisting of the set of points at which the systoles fill $\mathcal{S}_{g}$.\\


Teichm\"uller space has a decomposition into the thick part, where $f_{sys}$ is less than or equal to the Margulis constant, and the thin part, which is the complement of the thick part. For convenience, a decomposition into a $\delta$-thick part and $\delta$-thin part will be used, where $\delta>0$ might be less than the Margulis constant. The $\delta$-thick part of $\mathcal{T}_{g}$ is the subset of $\mathcal{T}_{g}$ on which $f_{sys}$ is greater than or equal to $\delta$, and will be denoted by $\mathcal{T}_{g}^{\delta}$.\\

It was shown in \cite{Ivanov} that $\mathcal{C}_{g}$ is $\Gamma_{g}$-equivariantly homotopy equivalent to the boundary of the thick part of Teichm\"uller space. The idea behind this is very simple. In the thin part of $\mathcal{T}_{g}$, the systoles are pairwise disjoint; at a given point they determine a multicurve $m$. The intersection of $\mathrm{Sys}(m)$ with the boundary of the thick part of $\mathcal{T}_{g}$ corresponds to the cell labelled by $m$ in the dual of $\mathcal{C}_{g}$. The same argument as in \cite{Ivanov} gives an embedding of $\mathcal{C}_{g}$ in the boundary of $\mathcal{T}_{g}^{\delta}$.  \\

\textbf{Sets of minima.} Let $C$ be a finite set of filling curves. From Lemma 1 of \cite{SchmutzMorse}, it is known that every length function given by a strictly positive linear combination of the lengths of curves in $C$ has a unique minimum at some point of $\mathcal{T}_{g}$. The set $\mathrm{Min}(C)$ defined in \cite{SchmutzMorse} consists of all points in $\mathcal{T}_{g}$ at which a length function of the form $L(A, C)$ for some $A\in \mathbb{R}_{+}^{|C|}$ has its minimum. A reference for sets of minima is Section 2 of \cite{SchmutzMorse}.\\

There is a surjective map $\phi_{C}:\mathbb{R}^{|C|}_{+}\twoheadrightarrow \mathrm{Min}(C)$, where here $\phi(A)$ is the point in $\mathrm{Min}(C)$ at which $L(A,C)$ has its minimum. Note that $\phi_{C}$ is not injective. This paper will be concerned with specific examples of sets $\mathrm{Min}(C)$ that are known to be differentiable cells with empty boundary.\\

Suppose $C=\{c_{1}, \ldots, c_{k}\}$ is a set of filling curves. The lengths of curves in $C$ are said to parameterise $\mathrm{Min}(C)$. For any $x_{1}$, $x_{2}$ in $\mathrm{Min}(C)$, the map $F:\mathrm{Min}(C)\rightarrow \mathbb{R}_{+}^{k}$ given by $x\mapsto (L(c_{1})(x), \ldots, L(c_{k})(x))$ has the property that $F(x_{1})=F(x_{2})$ only if $x_{1}=x_{2}$. When the rank of the Jacobian of $F(C)$ is constant on $\mathrm{Min}(C)$, as will always be the case in this paper, Section 2 of \cite{SchmutzMorse} showed how to use the lengths of the curves in $C$ to obtain a set of coordinates on $\mathrm{Min}(C)$. \\




\begin{lemma}[Lemma 4 of \cite{Bers}, or Proposition 1 from \cite{Thurston}]
\label{Thurstonprop}
Let $C'$ be any collection of curves on a surface that do not fill. Then at any point of $\mathcal{T}_{g}$, there are tangent vectors corresponding to directions in which the lengths of all the geodesics representing curves in $C'$ are increasing. 
\end{lemma}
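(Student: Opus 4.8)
The plan is to reduce the statement to a convexity fact about length functions by a standard linear-algebra duality, and then to prove that fact using a Dehn twist supported off $C'$. Fix a point $X\in\mathcal{T}_{g}$. What must be produced is a tangent vector $V\in T_{X}\mathcal{T}_{g}$ with $d(L(c))_{X}(V)>0$ for every $c\in C'$. By a separation argument (Gordan's theorem of the alternative; equivalently, such a $V$ fails to exist precisely when the origin of $T_{X}^{*}\mathcal{T}_{g}$ lies in the convex hull of $\{d(L(c))_{X}:c\in C'\}$), the non-existence of $V$ would yield a nonempty sub-collection $C''\subseteq C'$ together with strictly positive coefficients $A$ for which $d(L(A,C''))_{X}=0$, i.e.\ for which the length function $L(A,C'')$ has a critical point at $X$. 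Since any sub-collection of a non-filling collection is again non-filling, it therefore suffices to prove: \emph{a length function $L(A,C'')$ of a non-filling collection $C''$ has no critical point anywhere in $\mathcal{T}_{g}$}.

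To prove this, I would first use that, because $C''$ does not fill, there is an essential simple closed curve $\gamma$ disjoint from the geodesic representatives (at $X$) of every curve in $C''$: a curve disjoint from all of them lies in a single complementary region, and were that region a disk the curve would be inessential, contradicting non-filling. (This topological step is exactly the content of the cited results of Bers and Thurston, and I would either invoke them or give the standard innermost-arc/complementary-region argument.) The Dehn twist $T_{\gamma}$ then fixes the isotopy class of each $c\in C''$, so $L(A,C'')$ is invariant under the action of $T_{\gamma}$ on $\mathcal{T}_{g}$. Since $T_{\gamma}$ has infinite order, it acts freely on $\mathcal{T}_{g}$ (a mapping class fixing a point of $\mathcal{T}_{g}$ is realised by an isometry of the corresponding hyperbolic surface and so has finite order). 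Now suppose $L(A,C'')$ had a critical point $X_{0}$. As $L(A,C'')$ is convex along Weil--Petersson geodesics and any point of $\mathcal{T}_{g}$ is joined to $X_{0}$ by such a geodesic, $X_{0}$ is a global minimum. But then $T_{\gamma}X_{0}\neq X_{0}$ is also a critical point, hence also a global minimum, and restricting the strictly convex function $L(A,C'')$ to the Weil--Petersson geodesic from $X_{0}$ to $T_{\gamma}X_{0}$ would give a strictly convex function attaining its minimum at both endpoints, which is impossible. Hence $L(A,C'')$ has no critical point.

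Combining the two steps, the duality alternative forces the existence at every $X\in\mathcal{T}_{g}$ of a $V$ with $d(L(c))_{X}(V)>0$ for all $c\in C'$, which is precisely a direction along which all the lengths $L(c)$, $c\in C'$, are simultaneously increasing. The one genuinely delicate point is the topological lemma producing $\gamma$ — which is why the statement is credited to Bers and Thurston — and a secondary point to pin down is the precise form of Wolpert's theorem in use, namely that each $L(c)$, and hence every positive combination $L(A,C'')$, is \emph{strictly} convex along every Weil--Petersson geodesic, together with the fact that $\mathcal{T}_{g}$ is geodesically convex in the (incomplete) Weil--Petersson metric, so that the geodesic joining $X_{0}$ to $T_{\gamma}X_{0}$ really lies in $\mathcal{T}_{g}$. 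Everything else is bookkeeping.
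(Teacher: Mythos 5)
The paper does not prove Lemma~\ref{Thurstonprop}; it imports it as a known result from Bers and Thurston. Your proposal supplies a self-contained proof, so there is nothing in the paper to compare it against line by line, but the argument you give is essentially correct and is a clean, modern route (Wolpert's Weil--Petersson strict convexity plus a Dehn-twist symmetry), which is almost certainly not how the cited sources argue, since Wolpert's convexity postdates them. That is a legitimate and in some ways more transparent proof for a reader already steeped in Weil--Petersson geometry, which is the setting of this paper.

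Two small points to tighten. First, the phrase ``a strictly convex function attaining its minimum at both endpoints, which is impossible'' is slightly off: a strictly convex function on a segment can certainly have its minimum at an endpoint. The actual contradiction is that strict convexity with equal endpoint values forces the interior values to be strictly smaller, contradicting that $X_{0}$ is a global minimum of $L(A,C'')$; say that. Second, Gordan's alternative as you invoke it requires $C'$ to be finite. The lemma is only ever applied in the paper to finite collections, so this is harmless, but you should state the finiteness hypothesis (or argue separately that only finitely many curves have bounded length at $X$, so the general case reduces to the finite one). You also implicitly use that each $dL(c)_{X}\neq 0$, which your own Dehn-twist argument (applied to the single curve $c$) already yields, so it is worth noting the self-consistency. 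With those clarifications the proof stands.
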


It follows from Lemma 14 of \cite{SchmutzMorse} that points on $\partial\mathrm{Min}(C)$ in the interior of $\mathcal{T}_{g}$ are minima of strictly positive linear combinations of filling subsets of $C$. For a length function of the form $L(A, C')$, where $C'$ is a nonfilling subset of $C$, Lemma \ref{Thurstonprop} implies that any infima are realised as limits of sequences of points in $\mathrm{Min}(C)$. These are Schmutz's ``admissible boundary points'', \cite{SchmutzMorse}, Section 2. 



\section{From critical points to spheres}
\label{Broaddussec}


This section begins by surveying an important family of examples of critical points of the topological Morse function $f_{sys}$. These examples will later be related to the construction of $2g-2$-dimensional spheres in $\mathcal{C}_{g}$, whose construction is also given in this section.\\

\begin{example}[A critical point of $f_{sys}$ of index $2g-1$, from Theorem 36 of \cite{SchmutzMorse}]
\label{Schmutzexample}
Take a regular, right angled hyperbolic polygon $T$ with $2g+2$ sides. Four copies of $T$ can be glued together along four pairs of edges with a common vertex. This is done in such a way that each copy of $T$ shares one edge with two others. A right angled hyperbolic polygon with $8g-4$ sides is obtained. The construction with $g=2$ is shown in Figure \ref{genus2example}. This gives a fundamental domain of a surface. The sides of the fundamental domain are glued together in such a way that each edge of a copy of $T$ lies along a geodesic loop on the surface obtained by traversing exactly two edges, each on different copies of $T$. It transpires that the gluing maps are completely determined by this condition. When given a marking, this surface corresponds to a point in $\mathcal{T}_{g}$ that will be denoted by $p$.\\

It was shown in \cite{SchmutzMorse} that $p$ is a critical point of index $2g-1$ and that the set $C$ of systoles at $p$ is the set of curves that can be embedded in the graph on $S_{g}$ consisting of the edges of the four copies of $T$. This gives $2g+2$ systoles, each of which intersects exactly two other systoles. Let $c_{1}$ and $c_{2}$ be a pair of systoles that intersect. It was shown that $\mathrm{Min}(C\setminus \{c_{1}, c_{2}\})= \mathrm{Min}(C)$. For every $x\in \mathrm{Min}(C)$, the gradients $\{\nabla L(c)\ |\ c\in C\}$ lie in a $2g-1$ dimensional subspace of $T_{x}\mathcal{T}_{g}$. As $C\setminus\{c_{1}, c_{2}\}$ has no proper filling subsets and the rank of the Jacobian of the map $\mathcal{T}_{g}\rightarrow \mathbb{R}^{2g+2}$ given by $x\mapsto \bigl(L(c_{1}), \ldots, L(c_{2g+2})\bigr)$ is constant and equal to $2g-1$ everywhere on $\mathrm{Min}(C)$, it follows from Corollary 13 and Lemma 14 of \cite{SchmutzMorse} that $\mathrm{Min}(C)$ is a continously differentiable cell with empty boundary. \\

The cell $\mathrm{Min}(C)$ behaves like the unstable manifold of the critical point $p$. The definition of $\mathrm{Min}(C)$ implies that any vector in the tangent space to $\mathrm{Min}(C)$ at $p$ gives a direction in which the length of at least one curve in $C$ is decreasing. On a neighbourhood of $p$, the systoles are contained in $C$, so $p$ is a local maximum for $f_{sys}$ in $\mathrm{Min}(C)$.\\

The point $p$ is an element of a $\Gamma_{g}$-orbit of critical points. The systole function is invariant under the action of $\Gamma_{g}$. A $\gamma\in \Gamma_{g}$ maps a critical point with set of systoles $C$ to a critical point with systoles given by the image of $C$ under $\gamma$. Similarly, $\Gamma_{g}$ acts on sets of minima, mapping $\mathrm{Min}(C)$ to the set of minima of the image of $C$ under $\gamma$.\\


\end{example}

\begin{figure}[!thpb]
\centering
\includegraphics[width=0.6\textwidth]{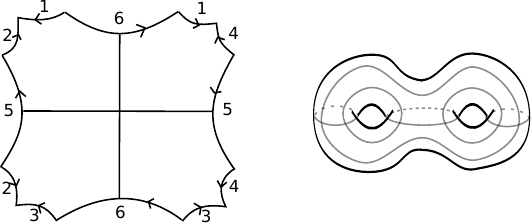}
\caption{The left side of this figure (not drawn to scale!) shows a fundamental domain of the genus 2 surface in Example \ref{Schmutzexample}. The systoles lie along the boundary and the edges shown. The numbers on the edges are intended to indicate the gluing maps. The right side of the figure shows the systoles on the surface. }
\label{genus2example}
\end{figure}

\begin{figure}
\centering
\includegraphics[width=0.8\textwidth]{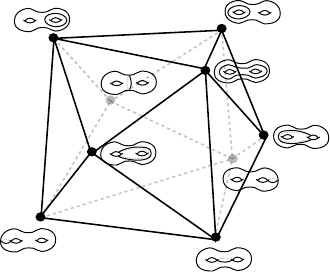}
\caption{This figure is reproduced from \cite{Broaddus} and shows the 1-skeleton of the triangulated sphere in $\mathcal{C}_{2}$ constructed by the algorithm below. Theorem \ref{maintheorem} shows this is homotopic in $\mathcal{C}_{g}$ to the intersection of $\mathrm{Min}(C)$ from Example \ref{Schmutzexample} with the embedding of $\mathcal{C}_{g}$ in $\partial\mathcal{T}_{g}^{\delta}$.  }
\label{genus2sphere}
\end{figure}

\textbf{Construction of the Sphere.} The $2g-2$ dimensional sphere in $\mathcal{C}_{g}$ from Conjecture 4.9 of \cite{Broaddus} is constructed as follows: Let $C$ be the set of $2g+2$ curves from Example \ref{Schmutzexample} and let $K_{2g+2}$ be a $(2g+2)$-gon. Choose a bijection between the vertices of $K_{2g+2}$ and the elements of $C$ with the property that adjacent vertices of $K_{2g+2}$ are labelled by intersecting curves. The symbol $\Phi_{g}$ will be used to denote the symplicial complex whose vertices correspond to diagonals of $K_{2g+2}$ and whose simplices correspond to sets of disjoint diagonals of $K_{2g+2}$.\\

An associahedron $\mathcal{K}_{n}$ is a polytope of dimension $n$ whose vertices correspond to all the distinct ways of parenthesising the word $x_{0}x_{1}\ldots x_{n+1}$. As shown in \cite{Lee2}, Theorem 1, the simplicial complex $\Phi_{g}$ is the boundary of the dual of the $2g-1$-dimensional associahedron $\mathcal{K}_{2g-1}$, and is therefore a triangulation of the $2g-2$-dimensional sphere. \\

The barycentric subdivision of $\mathcal{C}_{g}$ will be denoted by $\mathcal{C}_{g}^{\circ}$. Recall that each vertex $v$ of $\Phi_{g}$ is labelled by pair of nonintersecting curves $c(v)$ in $C$. A simplicial map 
\begin{equation}
\label{q}
q^{\circ}_{g}:\Phi_{g}\rightarrow \mathcal{C}_{g}^{\circ}
\end{equation}
 is contructed, where $q^{\circ}_{g}$ maps a vertex $v$ to the vertex in $\mathcal{C}_{g}^{\circ}$ labelled by the multicurve $m\bigl(C\setminus c(v)\bigr)$ consisting of the set of nontrivial homotopy classes of curves on the boundary of the surface obtained by cutting $S_{g}$ along the geodesic representatives of the curves $C\setminus c(v)$. Informally, $m(C')$ is contained in the boundary of the subsurface filled by $C'$.\\

The map $q^{\circ}_{g}$ can be seen to map simplices to simplices. The condition that the simplices of $\Phi_{g}$ correspond to sets of \textit{disjoint} diagonals of $K_{2g+2}$ implies that the image of a simplex under $\Phi_{g}$ is a simplex in $\mathcal{C}_{g}^{\circ}$.\\

\textbf{Defining the map} $\mathbf{q_{g}:\Phi_{g}\rightarrow \mathcal{C}_{g}}$. For the purposes of this paper, it will be simplest to work with the image in $C_{g}^{\circ}$ of $\Phi_{g}$ under $q^{\circ}_{g}$. This is a matter of taste; if one wants to obtain a sphere in $\mathcal{C}_{g}$, for any vertex $v$ with the property that $m\bigl(C\setminus c(v)\bigr)$ has more than one connected component, choose a single curve $m_{1}\bigl(C\setminus c(v)\bigr)$ in $m\bigl(C\setminus c(v)\bigr)$. It was shown in \cite{Broaddus} that $q^{\circ}_{g}$ is homotopic to a map from $S^{2g-2}$ to a subcomplex with the vertex in the image of $q^{\circ}_{g}$ labelled by $m\bigl(C\setminus c(v)\bigr)$ replaced by the vertex labelled by $m_{1}\bigl(C\setminus c(v)\bigr)$. This homotopy pushes simplices in the interior of a simplex of $\mathcal{C}_{g}^{\circ}$ onto the boundary of the simplex, as illustrated in Figure \ref{thehomotopy}. The map $q^{\circ}_{g}$, composed with one such choice of homotopy for every vertex not in $\mathcal{C}_{g}$, gives the map $q_{g}$. \\

For each $g\geq 2$ there is a map $q_{g}$, the homotopy class of which will be referred to as a Broaddus sphere.

\begin{figure}[!thpb]
\centering
\includegraphics[width=0.6\textwidth]{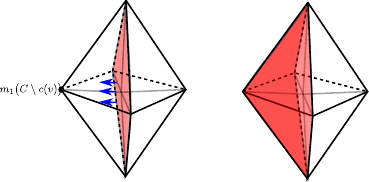}
\caption{Lower dimensional analogue of homotoping a subcomplex of $\mathcal{C}_{g}^{\circ}$ onto a subcomplex of $\mathcal{C}_{g}$. Edges and vertices of $\mathcal{C}_{g}^{\circ}$ not in $\mathcal{C}_{g}$ are shown in grey. }
\label{thehomotopy}
\end{figure}



\section{The Set of Minima}
\label{homotopysection}

The purpose of this section is to show that the Broaddus sphere in genus $g$ can be embedded in $\partial \mathcal{T}_{g}^{\delta}$ in such a way that it makes sense to think of it as the boundary of $\mathrm{Min}(C)$, where $C$ is the set of curves from Example \ref{Schmutzexample}.\\

\textbf{A thickening of the embedded curve complex. }
The dimension $3g-4$ of $\mathcal{C}_{g}$ is smaller than the dimension of $\mathcal{T}_{g}$, namely $6g-6$. In order to use the embedding of $\mathcal{C}_{g}$ into $\partial \mathcal{T}_{g}^{\delta}$ from \cite{Ivanov} it will be convenient to define a regular neighbourhood of the image of the embedding. An abuse of notation will be used to simplify the exposition; the image of the embedding of $\mathcal{C}_{g}$ or $\mathcal{C}_{g}^{\circ}$ in $\partial \mathcal{T}_{g}^{\delta}$ will also be referred to as $\mathcal{C}_{g}$ or $\mathcal{C}_{g}^{\circ}$.\\



Ivanov's embedding of $\mathcal{C}_{g}$ in $\partial \mathcal{T}_{g}^{\delta}$ has the property that a vertex labelled by a curve $c$ is mapped into $\mathrm{Sys}(\{c\})$, an edge labelled by $\{c_{1}, c_{2}\}$ is mapped into the union of $\mathrm{Sys}(\{c_{1}\})$, $\mathrm{Sys}(\{c_{2}\})$ and $\mathrm{Sys}(\{c_{1}\cup c_{2}\})$, etc. \\

For each multicurve $m$ labelling a top dimensional simplex of $\mathcal{C}_{g}$, the intersection of $\mathrm{Sys}(m)$ with $\partial \mathcal{T}_{g}^{\delta}$ is an embedded submanifold without boundary. For a  multicurve $m'$ with fewer connected components, the boundary of $\mathrm{Sys}(m')\cap \partial \mathcal{T}_{g}^{\delta}$ lies along $\{\mathrm{Sys}(m'')\cap \partial \mathcal{T}_{g}^{\delta}\ | \ m'\subsetneq m''\}$. As these extend to embedded submanifolds with boundary in a neighbourhood of $\partial\mathcal{T}_{g}^{\delta}$, the same arguments as in \cite{Ivanov} show that it is possible to embed a regular neighbourhood $\pi:\mathcal{N}_{g}\rightarrow\mathcal{C}_{g}^{\circ}$ into the thin part of $\mathcal{T}_{g}$ in such a way that the duality between the sets of systoles and the cells of the regular neighbourhood is preserved.  \\

\textbf{Construction of the map }$\mathbf{q_{C,g}}$. Recall the definition of $q^{\circ}_{g}:\Phi_{g}\rightarrow \mathcal{C}_{g}^{\circ}$ from the previous section, where $\Phi_{g}$ is a triangulation of a $2g-2$ dimensional sphere. Another map $q_{C,g}$ from $\Phi_{g}$ to $\mathcal{C}_{g}^{\circ}$ will now be constructed. This map factors through a map homotopic to the inclusion map into $\partial\mathcal{T}_{g}^{\delta}$ of a $2g-2$-dimensional sphere in the intersection $\partial\mathcal{T}_{g}^{\delta}\cap \mathrm{Min}(C)$.\\

The next construction is motivated by the discussion of admissible boundary points from Section \ref{defns}. Let $c_{1}, c_{2}$ be a pair of disjoint curves in $C$, and $C':=C\setminus \{c_{1}, c_{2}\}$ a nonfilling set of curves. The curves $ \{c_{1}, c_{2}\}$ correspond to a diagonal of $K_{2g+2}$. Then the length function $L(C')$ obtained by taking the sum of the lengths of curves in $C'$ has no minimum in $\mathcal{T}_{g}$ by Lemma \ref{Thurstonprop}. It can be thought of as having its infimum at the limit of a sequence of points whose parameters $(a_{1}, \ldots, a_{2g+2})\in \mathbb{R}^{2g+2}_{+}$ approach $\frac{1}{2g}(0, 0, 1, 1, \ldots, 1)$.\\

For convenience, from now on the normalisation factors on the parameters in $\mathbb{R}_{+}^{2g+2}$ will be dropped. For the arguments given here, it is only of interest which of the parameters are close to zero.\\

For small $\delta$, the length function $L(C')$ nearly reaches its infimum on $\mathrm{Min}(C)\cap\partial\mathcal{T}_{g}^{\delta}$. Denote by $v(C')$ a point at which $L(C')$ is minimised on $\mathrm{Min}(C)\cap\partial\mathcal{T}_{g}^{\delta}$. Similarly, for any other vertex of $\Phi_{g}$, define a corresponding point in $\mathrm{Min}(C)\cap\partial\mathcal{T}_{g}^{\delta}$. The only restriction imposed here is that any choices are made in such a way that the embedding is invariant under the action of the subgroup of $\Gamma_{g}$ that maps $\mathrm{Min}(C)$ to itself. \\

Imposing the restriction that any choices in the construction of the embedding are made in a $\Gamma_{g}$-equivariant way is not a problem, for reasons that will now be explained. The subgroup of $\Gamma_{g}$ that maps $\mathrm{Min}(C)$ to itself permutes the curves in $C$ in a way that preserves or reverses the cyclic ordering. This subgroup could only map a minimum of $L(C')$ on $\mathrm{Min}(C)\cap\partial\mathcal{T}_{g}^{\delta}$ to a different minimum if it maps $C'$ to itself. This is only possible if $c_{1}$ and $c_{2}$ represent a pair of opposite vertices of $K_{2g+2}$. In this case, $m(C')$ consists of a single separating curve that is invariant under the action of the subgroup. It follows from Riera's theorem that the minima of $L(C')$ on $\mathrm{Min}(C)\cap\partial\mathcal{T}_{g}^{\delta}$ occur where $m(C')$ is a systole. By symmetry, such points are therefore invariant under the action of the subgroup of $\Gamma_{g}$ that maps $\mathrm{Min}(C)$ to itself. It follows that any choices can be made in a $\Gamma_{g}$-equivariant way.\\

Every edge of $\Phi_{g}$ determines a pair of endpoints $v(C')$ and $v(C'')$ with $m(C')$ and $m(C'')$ disjoint multicurves. The midpoint of this edge is represented by a tuple in $\mathbb{R}^{2g+2}_{+}$ with parameters close to zero in 3 or 4 positions, and all other parameters close to 1. The parameters that are small consist of the union of the parameters that are small at the vertices. \\

To be more precise: $C'=C\setminus\{c_{i}\cup c_{k}\}$ and $C''=C\setminus\{c_{l}, c_{m}\}$ where $c_{l}$ or $c_{m}$ (but not both) might be in $\{c_{i}, c_{j}\}$. Then the midpoint of the edge connecting $v(C')$ and $v(C'')$ has parameters  $a_{i}, a_{j}, a_{l}$ and $a_{m}$ close to zero, and all others close to 1. The midpoint of this edge can be thought of as a minimum of the length function $L(C\setminus \{c_{i}, c_{j}, c_{l}, c_{m}\})$ on $\partial\mathcal{T}_{g}^{\delta}$. Between the midpoint and each of the endpoints, the edge is defined by taking a linear interpolation of the parameters. Maps of higher dimensional simplices of $\Phi_{g}$ into $\mathrm{Min}(C)$ are defined analogously, with midpoints contained in $\partial\mathcal{T}_{g}^{\delta}\cap \mathrm{Min}(C)$.\\

The image of the simplices of $\Phi_{g}$ are not guaranteed to be contained in $\partial \mathcal{T}_{g}^{\delta}$, but for sufficiently small $\delta$, $\mathcal{N}_{g}$ can be chosen in such a way that they are contained in $\mathcal{N}_{g}$. Since $\Phi_{g}$ is a $2g-2$-sphere, and $\mathrm{Min}(C)$ a $2g-1$-ball, inside $\mathcal{N}_{g}$ the embedding of $\Phi_{g}$ can be homotoped to have image in the intersection $\partial\mathcal{T}_{g}^{\delta}\cap \mathrm{Min}(C)$. Composing with $\pi:\mathcal{N}_{g}\rightarrow \mathcal{C}_{g}^{\circ}$ gives a homotopy in $\mathcal{C}_{g}^{\circ}$. In this way an embedding of a $2g-2$-sphere in $\mathcal{C}_{g}^{\circ}$ denoted by $q_{C,g}$ is obtained.

\begin{lemma}
\label{maintheorem}
For $g\geq 2$, $q^{\circ}_{g}$ is $\Gamma_{g}$-equivariantly homotopic to $q_{C,g}$.
\end{lemma}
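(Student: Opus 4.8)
The plan is to show that $q^\circ_g$ and $q_{C,g}$ agree on vertices up to the homotopies that define $q_{C,g}$, and then to upgrade this vertex-level agreement to a simplicial homotopy using the fact that both maps land (up to homotopy) in the curve complex, which is highly connected in the relevant range. First I would unwind the two constructions on a single vertex $v$ of $\Phi_g$, say $v$ corresponding to the diagonal of $K_{2g+2}$ with endpoints labelled by the disjoint pair $\{c_1,c_2\}$ and $C'=C\setminus\{c_1,c_2\}$. By definition $q^\circ_g(v)$ is the vertex of $\mathcal{C}_g^\circ$ labelled by the multicurve $m(C')$ on the boundary of the subsurface filled by $C'$. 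On the other side, $q_{C,g}(v)$ is obtained from the point $v(C')\in \mathrm{Min}(C)\cap\partial\mathcal{T}_g^\delta$ at which $L(C')$ is (nearly) minimised, pushed under the projection $\pi:\mathcal{N}_g\to\mathcal{C}_g^\circ$. The content of the earlier discussion of admissible boundary points is precisely that as $\delta\to 0$, the point $v(C')$ limits onto the admissible boundary point of $\mathrm{Min}(C)$ where the parameters degenerate to $(0,0,1,\dots,1)$, i.e. where the curves of $C'$ are being forced long while $c_1,c_2$ stay short; by Lemma \ref{Thurstonprop} this degeneration is exactly pinching along the multicurve $m(C')$, so $v(C')$ lies in the regular-neighbourhood cell dual to $m(C')$, and hence $\pi(v(C'))$ is the vertex labelled $m(C')$. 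Thus $q^\circ_g$ and $q_{C,g}$ have the same value on every vertex of $\Phi_g$.

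Next I would extend this to simplices. Both maps are simplicial (after barycentric subdivision) and, on an edge of $\Phi_g$ with endpoints $v(C')$, $v(C'')$, the map $q_{C,g}$ was built by linear interpolation of parameters through a midpoint where four parameters are small; I would check that the resulting path in $\mathrm{Min}(C)\cap\partial\mathcal{T}_g^\delta$, projected via $\pi$, traverses exactly the edge of $\mathcal{C}_g^\circ$ from the vertex labelled $m(C')$ to the vertex labelled $m(C'')$ through the vertex labelled by the multicurve of the larger nonfilling set $C\setminus\{c_i,c_j,c_l,c_m\}$ — which is the same simplex $q^\circ_g$ spans, because disjointness of the diagonals translates into nestedness/disjointness of the associated multicurves. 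Doing this dimension by dimension (midpoints of higher simplices sitting in deeper strata of $\partial\mathcal{T}_g^\delta\cap\mathrm{Min}(C)$, matched with the corresponding faces of $\mathcal{C}_g^\circ$) shows that $q^\circ_g$ and $q_{C,g}$ are simplicially close: they send each simplex of $\Phi_g$ into a common simplex (or pair of incident simplices) of $\mathcal{C}_g^\circ$. A standard straight-line / simplicial-approximation homotopy inside those simplices then connects the two maps, and since every choice in the construction of $q_{C,g}$ was made invariantly under the subgroup of $\Gamma_g$ preserving $\mathrm{Min}(C)$ — and $\Gamma_g$ acts on the whole picture (curves, sets of minima, the cell structure of $\mathcal{N}_g$) compatibly — this homotopy can be performed $\Gamma_g$-equivariantly by averaging over the relevant finite stabilisers, or simply by noting the homotopy is canonical once the simplex-level agreement is established.

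The main obstacle I anticipate is the middle step: making rigorous the claim that $\pi(v(C'))$ is the vertex labelled $m(C')$, and more generally that the interpolated simplices of $q_{C,g}$ land in the neighbourhood cells dual to exactly the multicurves $q^\circ_g$ assigns. This requires controlling, for $\delta$ small, where in the thin part the near-minimiser $v(C')$ sits — i.e. identifying which curves have become short — and this is where Lemma \ref{Thurstonprop} together with the convexity properties of length functions and Riera's theorem (for the separating-curve case) must be combined carefully; one has to rule out that additional, unwanted curves become systoles along the way. The compactness/continuity of the parametrisation $\phi_C:\mathbb{R}^{2g+2}_+\twoheadrightarrow\mathrm{Min}(C)$ and the explicit product structure of the degenerating surface from Example \ref{Schmutzexample} should make this tractable, but it is the step that needs genuine geometric input rather than formal manipulation. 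Everything after that — the simplicial homotopy and its equivariance — is routine given the highly-connected target and the equivariance of all choices.
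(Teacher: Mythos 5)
Your proposal follows the same strategy as the paper: identify where each map sends the cells of $\Phi_{g}$, show the images of $q_{C,g}$ lie in cells of $\mathcal{C}_{g}^{\circ}$ containing the corresponding cells of $q^{\circ}_{g}(\Phi_{g})$, and then run a straight-line (simplicial) homotopy inside those cells, with equivariance coming for free from the equivariant construction of $q_{C,g}$. The geometric input you flag as the ``main obstacle'' --- controlling which curves are short at $v(C')$ and along the interpolated simplices --- is precisely what the paper's Claim~1 (the multicurves $m(C_{i})$ have length below the collar threshold everywhere on a top-dimensional $\sigma$) and Claim~2 (any systole on $\sigma$ is contained in one of the $m(C_{i})$) establish, using Keen's collar lemma, the Margulis bound from \cite{Yamada}, and Lemma~\ref{Thurstonprop}. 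You correctly name the tools but do not carry out the argument; that step is the substance of the proof and is not merely ``tractable bookkeeping.''

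One substantive overstatement to correct: you claim that $\pi(v(C'))$ \emph{is} the vertex of $\mathcal{C}_{g}^{\circ}$ labelled $m(C')$, i.e.\ that the two maps literally agree on vertices. That is stronger than what is proved or needed, and may well be false: there is no reason the set of systoles at the near-minimiser $v(C')$ should be all of $m(C')$ rather than a proper submulticurve of it. What Claim~2 actually gives is that the systoles at $v(C')$ form a submulticurve of $m(C')$, so $v(C')$ lies in a cell of the regular neighbourhood containing the vertex labelled $m(C')$; that weaker containment is exactly enough to move $v(C')$ to $v(C')^{\mathcal{C}}$ by a straight line inside that cell, which is how the homotopy is built. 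If you insist on vertex-level equality you are trying to prove something false; if you relax to cell-level containment (and likewise for the higher skeleta, where the interpolated midpoints sit in deeper strata of $\partial\mathcal{T}_{g}^{\delta}\cap\mathrm{Min}(C)$) you recover the paper's argument. With that correction, and with Claims~1--2 actually proved rather than asserted, your proposal is the paper's proof.
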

\begin{proof}

Suppose $C_{i}$ for $i=1, \ldots, n$ are nonfilling subsets of $C$, each of which is obtained by deleting a pair of disjoint curves from $C$, and such that $\{m(C_{1}), \ldots, m(C_{n})\}$ are pairwise disjoint.\\

Claim 1 - Suppose $\sigma$ is a top dimensional simplex in the image of the embedding of $\Phi_{g}$ by $q_{C,g}$ defined above, with vertices given by $\{v(C_{1}),\ldots, v(C_{n})\}$. Then for sufficiently small $\delta$, everywhere on $\sigma$ the length of any multicurve in the set $\{m(C_{1}),\ldots, m(C_{n})\}$ is less than the diameter of the collar of a geodesic of length $\delta$.\\

Note that in the $\delta$-thin part of $\mathcal{T}_{g}^{\delta}$, the statement that a given curve has length less than the diameter of a collar of a geodesic of length $\delta$ amounts to saying that the curve is disjoint from the systoles or is a systole.\\

Proof of Claim 1: Using Keen's collar lemma, the bound on the Margulis constant given in \cite{Yamada} and some hyperbolic trigonometry, for example Theorem 3.5.4 of \cite{Ratcliffe}, it is not hard to check that the length of the systoles at $p$ in $\mathrm{Min}(C)$ is less than the diameter of the collar of a geodesic of length $\delta$. Similarly, for sufficiently small $\delta$, the length of $m(C')$ at $p$ is less than the diameter of the collar of a geodesic of length $\delta$. Moreover, Lemma \ref{Thurstonprop} implies that the length function $L(C')$, for nonfilling $C'\subsetneq C$, has some values on $\partial\mathcal{T}_{g}^{\delta}$ smaller than at $p$.\\

Denote by $\Sigma(C_{i})$ the union of cells each containing the vertex $v(C_{i})$. At the vertex $v(C_{i})$ the systoles are disjoint from every curve in $C_{i}$ and hence from $m(C_{i})$. Otherwise, for sufficiently small $\delta$, one of the systoles would intersect a curve in $C_{i}$, making $L(C_{i})$ larger than its value at $p$. The systoles are therefore contained in the multicurve $m(C_{i})$ at $v(C_{i})$. Any curves in $m(C_{i})$ that are not systoles at the vertex must also have length less than the diameter of the collar of a geodesic of length $\delta$. This is because such curves are homotopic to curves lying along arcs of the curves $C_{i}$ and hence cannot be too long without forcing $L(C_{i})$ to be larger than its value at $p$.\\

At the vertex $v(C_{j})$ of $\Sigma(C_{i})$, each curve in $m(C_{i})$ is either homotopic to a curve in $m(C_{j})$ or to a piecewise smooth curve lying along some of the geodesic representatives of the curves in $C_{j}$. Either way, for sufficiently small $\delta$, the length of $m(C_{j})$ is less than the diameter of the collar of a geodesic of length $\delta$. The same is true for any vertex or midpoint of a simplex in $\Sigma(C_{i})$. This can then be extended to $\Sigma(C_{i})$, because the length functions minimised over the cells are linear combinations of the length functions minimised at the vertices and midpoints of cells. Since $\sigma$ is in the intersection $$\cap_{j=1}^{n}\Sigma(C_{j})$$ the claim follows.\\

Claim 2 - Suppose $\sigma$ is a top dimensional simplex with vertices $\{v(C_{1}),\ldots, v(C_{n})\}$ as in Claim 1. Then for sufficiently small $\delta$, if $c$ is a systole somewhere on $\sigma$, $c$ is contained in one of the multicurves $\{m(C_{1}),\ldots, m(C_{n})\}$.\\

Claim 2 is a corollary of Claim 1. Since $\sigma$ is top dimensional, it follows from the construction that the multicurves $\{m(C_{1}),\ldots, m(C_{n})\}$ determine a pants decomposition of $\mathcal{S}_{g}$. If $c$ were a systole on $\sigma$ intersecting one of the multicurves $\{m(C_{1}),\ldots, m(C_{n})\}$, this would contradict Claim 1.\\


A $\Gamma_{g}$-equivariant homotopy between $q_{C,g}$ and $q^{\circ}_{g}$ will now be constructed. Let $\sigma$ be the image under $q_{C,g}$ of a top dimensional cell of the barycentric subdivision of $\Phi_{g}$. The vertices of $\sigma$ are already on $\mathcal{C}_{g}^{\circ}$. A vertex $v(C')$ will be mapped to the vertex $v(C')^{\mathcal{C}}$ of $\mathcal{C}_{g}^{\circ}$ labelled by $m(C')$. Recall that by Claim 2, the systoles at $v(C')$ are a submulticurve of $m(C')$. So $v(C')$ is already contained in a cell in $\mathcal{C}_{g}$ containing $v(C')^{\mathcal{C}}$. The homotopy shifts $v(C')$ to $v(C')^{\mathcal{C}}$ along a straight line contained within this simplex. \\

Now suppose $e(C', C'')$ is an edge of the embedding of the barycentric subdivision of $\Phi_{g}$. Here $v(C')$ is a vertex and $v(C'')$ the midpoint. Then by Claim 2, the edge $e(C', C'')$ is contained in a thickened simplex containing the edge $e(C', C'')^{\mathcal{C}}$ in the image of $\mathcal{C}_{g}^{\circ}$ joining $v(C')^{\mathcal{C}}$ and $v(C'')^{\mathcal{C}}$. The homotopy maps $e(C', C'')$ to $e(C', C'')^{\mathcal{C}}$ by shifting points along straight lines within this thickened simplex, extending the homotopy taking the endpoints of $e(C', C'')$ to the endpoints of $e(C', C'')^{\mathcal{C}}$.\\

The homotopy can be defined inductively, where the homotopy on the $n+1$-skeleton extends the homotopy of the $n$-skeleton.
\end{proof}

\section{Proof of the Conjecture}
\label{finalproof}

This section gives a proof of the main theorem and its corollary.\\

\begin{theorem}
\label{blahblah}
$\mathrm{St}(S_{g})$ is generated by the $\Gamma_{g}$-orbit of the image in $\mathcal{C}_{g}$ of the map $q_{g}$.
\end{theorem}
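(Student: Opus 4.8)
The plan is to reduce the theorem, via Lemma~\ref{maintheorem} together with the cyclicity of $\mathrm{St}(\mathcal S_g)$ from \cite{Broaddus}, to the single assertion that the homology class carried by $q_g$ is nonzero, and then to prove this nonvanishing by a symmetry argument anchored on the set of minima $\mathrm{Min}(C)$ of Example~\ref{Schmutzexample}.

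\textbf{Reduction to nonvanishing.} First I would invoke Lemma~\ref{maintheorem}: $q^\circ_g$ is $\Gamma_g$-equivariantly homotopic to $q_{C,g}$, and $q_g$ carries the same class as $q^\circ_g$ because of the homotopies of Section~\ref{Broaddussec}; write $\beta_g\in\tilde H_{2g-2}(\mathcal C_g;\mathbb Z)=\mathrm{St}(\mathcal S_g)$ for this class. Thus the image of $q_g$ in $\mathcal C_g$ represents, $\Gamma_g$-equivariantly, the class of the embedded $(2g-2)$-sphere $\mathrm{Min}(C)\cap\partial\mathcal T_g^\delta=\partial\mathrm{Min}(C)$. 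As $\mathrm{St}(\mathcal S_g)$ is a cyclic $\mathbb Z[\Gamma_g]$-module \cite{Broaddus}, to finish it is enough to know that $\beta_g\neq0$ and that $\beta_g$ lies in the orbit of a module generator; the latter I expect to follow from a comparison of the two sphere constructions and is immediate in genus $2$ (Figure~\ref{genus2sphere}), so the substantive new content is the nonvanishing $\beta_g\neq0$.

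\textbf{Nonvanishing via symmetry.} Let $G\leq\Gamma_g$ be the (finite) stabiliser of $\mathrm{Min}(C)$; by Section~\ref{homotopysection} it permutes $C$ through the dihedral symmetries of $K_{2g+2}$, and --- the embedding of $q_{C,g}$ having been chosen $\Gamma_g$-equivariantly --- $G$ acts on the closed $(2g-1)$-ball $B:=\mathrm{Min}(C)\cap\overline{\mathcal T_g^\delta}$ and on its boundary sphere $q_{C,g}$. I would record three points. (i) The critical point $p$ is the \emph{unique} $G$-fixed point of $B$: at a $G$-fixed point $x$ the length spectrum $c\mapsto L(c)(x)$ is constant on $G$-orbits in $C$, hence constant since $G$ acts transitively on $C$, and as $x\mapsto\bigl(L(c)(x)\bigr)_{c\in C}$ is injective on $\mathrm{Min}(C)$ there is at most one such $x$, which $p$ realises. (ii) Consequently $q_{C,g}$ is $G$-fixed-point-free, since $p$ lies deep in the $\delta$-thick part. (iii) $\mathcal T_g$ is $G$-equivariantly contractible, with $\mathcal T_g^{G}$ the (contractible, lower-dimensional) Teichm\"uller space of the quotient orbifold $\mathcal S_g/G$. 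Now I would argue by contradiction. If $\beta_g=0$ then, $\mathcal C_g\simeq\partial\mathcal T_g^\delta$ being $(2g-3)$-connected with $2g-2\geq2$, the map $q_{C,g}\colon S^{2g-2}\to\partial\mathcal T_g^\delta$ is nullhomotopic; equivariant obstruction theory, fed with (ii) and with the connectivity of the fixed sets $(\partial\mathcal T_g^\delta)^{H}$ for $H\leq G$, promotes this to a $G$-equivariant nullhomotopy $D^{2g-1}\to\partial\mathcal T_g^\delta$. Gluing $B$ to $D^{2g-1}$ along $q_{C,g}$ produces a $G$-equivariant map $S^{2g-1}\to\mathcal T_g$, which must bound $G$-equivariantly by (iii); restricting the bounding disc to $G$-fixed sets and using (i) --- so that $B$ contributes exactly the isolated fixed point $p$, at which the linear $G$-action on $T_pB$ fixes no nonzero vector --- the fixed-point index of $p$ cannot be cancelled inside the contractible $\mathcal T_g^{G}$, a contradiction. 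Hence $\beta_g\neq0$, which proves the theorem. (Running the same fixed-point bookkeeping for an arbitrary $\gamma\in\Gamma_g$ fixing $\beta_g$ should similarly yield Corollary~\ref{kernel}.)

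\textbf{Main obstacle.} The hard part will be this last paragraph: producing a genuinely \emph{$G$-equivariant} nullhomotopy and then extracting a contradiction from its fixed locus. Concretely I expect to need (a) a description of $\mathcal C_g^{\,H}$ and $\mathcal T_g^{\,H}$ for every subgroup $H\leq G$, together with the verification that the Bredon-cohomology obstructions to equivariant extension vanish in degrees $\leq2g-1$ (which rests on sufficient connectivity of these fixed sets), and (b) a Smith-theoretic localisation around the isolated fixed point $p$ making rigorous the claim that its index obstructs equivariant bounding in $\mathcal T_g^{G}$. The genus-$2$ case should be handled separately, but there $q_2$ coincides with the sphere of \cite{Broaddus}, so only $g\geq3$ needs the argument; everything outside (a) and (b) is formal.
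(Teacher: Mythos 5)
Your overall architecture — reduce to nonvanishing of $[q_g]$ via Lemma~\ref{maintheorem} and cyclicity, then use the symmetry group $G_C$ of $\mathrm{Min}(C)$ and a fixed-point argument — is the same philosophy as the paper, but the way you execute the nonvanishing step is genuinely different. The paper does not pass through Bredon cohomology or Smith theory at all; instead it isolates the explicit arc $\gamma$ (the pointwise fixed set of $G_C$, a Weil--Petersson and Teichm\"uller geodesic through $p$), shows any putative $G_C$-equivariant nullhomotopy would have to slide along $\gamma$, and then splits into two cases (pseudo-Anosov axis or not). In the second case it produces two distinct balls $\mathcal B_g^L$, $\mathcal B_g^R$ and glues them to a $(2g-1)$-sphere whose required bounding disc contradicts the discreteness of $\gamma\cap\partial\mathcal T_g^\delta$. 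Your route would replace all of this with an equivariant obstruction-theory/localisation package; it could work, but you honestly flag it as a program, and the paper deliberately avoids those tools in favour of the concrete geometry of $\gamma$.

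There are two concrete gaps. First, your step (i) does not establish what you need: injectivity of $x\mapsto(L(c)(x))_{c\in C}$ on $\mathrm{Min}(C)$ only says that \emph{two} $G$-fixed points with the \emph{same} common length must coincide; it does not forbid a one-parameter family of $G$-fixed points in $\mathrm{Min}(C)$ along which the common length varies (indeed, $\gamma$ is exactly such a candidate curve of symmetric metrics, and it is not obvious without further work how much of it lies in $\mathrm{Min}(C)\cap\overline{\mathcal T_g^\delta}$). Without uniqueness of the fixed point inside $B$, the Smith-theoretic index argument at $p$ doesn't close. Second, and more importantly, you conflate ``$\beta_g\neq0$'' with ``$\beta_g$ generates.'' Since $\mathrm{St}(\mathcal S_g)$ is cyclic as a $\mathbb Z[\Gamma_g]$-module, a nonzero class can still fail to generate (it could be a proper multiple of a primitive class). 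The paper handles this with a separate primitivity argument: it assumes $[q_g]$ is a multiple of a primitive class with strictly larger stabiliser $G_C^{prim}$, uses Nielsen realisation to locate a $G_C^{prim}$-fixed point on $\gamma$, and derives a dimension contradiction. Your ``I expect this to follow from a comparison of the two sphere constructions'' skips an essential half of the theorem; as written the proposal proves (modulo the obstruction-theory program) at most Lemma~\ref{Blem}, not Theorem~\ref{blahblah}.
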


\begin{proof}

The theorem will be proven by first proving the lemma below. As it was shown in Theorem 4.2 of \cite{Broaddus} that $\mathrm{St}(\mathcal{S}_{g})$ is cyclic modulo the action of $\Gamma_{g}$, it then sufficies to show primitivity of the homology class with representative given by the image of the map $q_{g}$.

\begin{lemma}[Conjecture 4.9 of \cite{Broaddus}]
\label{Blem}
The image of the map $q_{g}$ represents a nontrivial element of $\tilde{H}_{2g-2}(\mathcal{C}_{g};\mathbb{Z})$.
\end{lemma}

For convenience, the map $q^{\circ}_{g}$ with image in $\mathcal{C}_{g}^{\circ}$ will be used in place of $q_{g}$. This can be done because it was shown that the two maps are homotopic.\\

The cases $g=1$ and $g=2$ of the lemma were proven in \cite{Broaddus}, where the first nontrivial case $g=2$ was worked through explicitly. The sphere obtained in the case $g=2$ is shown in Figure \ref{genus2sphere}.\\

For simplicity, balls and spheres in $\mathcal{N}_{g}\cap \partial\mathcal{T}_{g}^{\delta}$ will be confused with corresponding objects in $\mathcal{C}_{g}$. This is done by identifying the image of the embedding of $\mathcal{C}_{g}$ with $\mathcal{C}_{g}$, and projecting $\mathcal{N}_{g}$ onto this image via the systole-preserving map $\pi$.\\

Let $C$ be the set of curves from Example \ref{Schmutzexample}. It follows from Lemma \ref{maintheorem} and the definition of $q_{C,g}$ that the image of $q^{\circ}_{g}$ represents a boundary in $\mathcal{C}_{g}^{\circ}$ iff the ball $\mathcal{B}_{g}$ in $\mathrm{Min}(C)$ bounded by the image of the embedding of $\Phi_{g}$ can be homotoped relative to its boundary into $\partial\mathcal{T}_{g}^{\delta}$. \\

Suppose there exists a homotopy of $\mathcal{B}_{g}$ relative to its boundary into $\partial \mathcal{T}_{g}^{\delta}$. Denote by $\mathcal{B}_{g}^{\mathcal{C}}$ a ball in $\mathcal{C}_{g}^{\circ}$ homotopic to  $\mathcal{B}_{g}$. It is possible to assume without loss of generality that $\mathcal{B}_{g}^{\mathcal{C}}$ is invariant under the action of the finite subgroup of $\Gamma_{g}$ that stabilises $\partial\mathcal{B}_{g}^{\mathcal{C}}$. If $\mathcal{B}_{g}^\mathcal{C}$ could not be chosen this way, Theorem 4.2 of \cite{Broaddus} would imply that $\mathrm{St}(S_{g})$ is finite modulo the action of the mapping class group.\\

The critical point $p$ from Example \ref{Schmutzexample} has an automorphism group that acts transitively on the curves in $C$. This automorphism group therefore corresponds to a subgroup of $\Gamma_{g}$ that stabilises $\mathrm{Min}(C)$. Since the action of the mapping class group preserves injectivity radius, this subgroup also stabilises $\mathcal{B}_{g}$ and its boundary. This group will be called $G_{C}$ from now on. $G_{C}$ contains a cyclic element of order $2g+2$ that permutes the systoles, preserving the cyclic ordering. In addition, it contains elements that correspond to a reflection of the $(2g+2)$-gon $K_{2g+2}$ from Section \ref{Broaddussec}.\\





A unique path $\gamma$ that is the fixed point set of this subgroup of $G_{C}$ will now be described. This is an example of a type of path constructed in \cite{Sanki}.\\

At the critical point $p$ in Example \ref{Schmutzexample}, the systoles intersect at right angles, and cut the surface into right angled $(2g+2)$-gons. The set $C$ can be decomposed into two multicurves, each with $g+1$ elements; these are the ``A-curves'' and the ``B-curves''. The edges of each $(2g+2)$-gon alternately lie along A-curves and B-curves. Each systole lies along two edges of this tesselation, and these two edges are on the boundary of different $(2g+2)$-gons. \\

It is possible to deform the hyperbolic structure such that the tesselation by right angled $(2g+2)$-gons becomes a tesselation by $(2g+2)$-gons with all edge lengths equal and angles that alternate between $\pi/2+t$ and $\pi/2-t$ for $t\in [0,\pi/2)$. This is done in such a way that all angles at the vertices of the tesselation still sum to $2\pi$ and opposite pairs of edges at vertices continue to meet at angle $\pi$. The hyperbolic structure on the $(2g+2)$-gons extends to a hyperbolic structure on the tesselated surface. A map $\gamma:(-\pi/2, \pi/2)\rightarrow \mathcal{T}_{g}$ is obtained. Here $t\in (-\pi/2,\pi/2)$ is mapped to the point in $\mathcal{T}_{g}$ represented by the hyperbolic surface tesselated by the complements of the geodesics in $C$, where the tesselation consists of $(2g+2)$-gons with angles alternating between $\pi/2+t$ and $\pi/2-t$.\\

The path $\gamma$ is the fixed point set of the subgroup $G_{C}$ of $\Gamma_{g}$. Since $G_{C}$ acts by isometry, it is a necessary condition that the lengths of the curves in $C$ are all equal at every point of the fixed point set, similarly for the angles of intersection, where these are defined appropriately using an orientation convention as in the definition of $\gamma$. The action of $G_{C}$ at $\gamma(t)$ performs the same permutation of the $(2g+2)$-gons of the tesselation of $\gamma(t)$ and their edges as at the critical point $\gamma(0)$. The fixed point sets of the reflections contain $\gamma$, and extend radially outwards from $\gamma$, intersecting $\partial\mathcal{B}_{g}$. The Brouwer fixed point theorem implies that the fixed point set of the cyclic subgroup of $G_{C}$ must intersect $\mathcal{B}_{g}^{C}$, similarly for the fixed point set of the reflections. It follows that $\gamma$ intersects $\mathcal{B}_{g}^{C}$.  \\

The quotient $\mathcal{T}_{g}/G_{C}$ is therefore an orbifold with trivial fundamental group and cone singularities along $\gamma$ and radiating outwards from $\gamma$, as explained above. The homotopy from  $\mathcal{B}_{g}$ to $\mathcal{B}_{g}^{\mathcal{C}}$ can therefore be assumed to be $G_{C}$-equivariant; its projection to $\mathcal{T}_{g}/G_{C}$ is a homotopy between two singular balls with the same boundary. This homotopy shifts points on each fixed point set along the fixed point set.\\

Again due to the fact that $G_{C}$ acts by isometry, $\gamma(t)$ is a geodesic with respect to any $\Gamma_{g}$-equivariant metric on $\mathcal{T}_{g}$. In particular, it is a geodesic with respect to the Teichm\"uller metric. If $\gamma$ is the axis of a pseudo-Anosov (calculations suggest this is not the case), the image of $\gamma$ in the moduli space is a closed loop  in the $\delta$-thick part of $\mathcal{T}_{g}$ for some $\delta$. Since a $\Gamma_{g}$-equivariant homotopy of the ball $\mathcal{B}_{g}$ can only shift $\mathcal{B}_{g}$ in such a way that the image of the critical point stays on $\gamma$, this contradicts the existence of a homotopy.\\

When $\gamma$ is not the axis of a pseudo-Anosov, it intersects $\partial \mathcal{T}_{g}^{\delta}$ in a discrete set of points. This follows from the observation that $\gamma$ is also a Weil-Petersson geodesic, and curve lengths are strictly convex along Weil-Petersson geodesics, \cite{Wolpert}.\\

If $\mathcal{B}_{g}$ is homotopic into $\partial\mathcal{T}_{g}^{\delta}$, by symmetry, there must be at least two $G_{C}$-equivariant homotopies; one moving a point of $\mathcal{B}_{g}$ in one direction along $\gamma$, and the other in the opposite direction along $\gamma$. These homotopies take $\mathcal{B}_{g}$ to distinct balls $\mathcal{B}_{g}^{L}$ and $\mathcal{B}_{g}^{R}$ in $\mathcal{C}_{g}$, as demonstrated by the fact that the set of systoles at the two corresponding points of intersection of $\gamma$ with $\partial \mathcal{T}_{g}^{\delta}$ are different. Both these balls in $\mathcal{C}_{g}$ have dimension $2g-1$, as they have $2g-2$-dimensional boundary.\\

As $\mathcal{C}_{g}$ has the homotopy type of a wedge of spheres of dimension $2g-2$, the $(2g-1)$-sphere in $\mathcal{C}_{g}$ obtained by gluing $\mathcal{B}_{g}^{L}$ and $\mathcal{B}_{g}^{R}$ together along their boundaries must bound a ball in $\mathcal{C}_{g}$. As this is also the case in the projection to $\mathcal{T}_{g}/G_{C}$, the $\Gamma_{g}$ orbits of $\mathcal{B}_{g}^{L}$ and $\mathcal{B}_{g}^{R}$ must be $G_{C}$-equivariantly homotopic in $\mathcal{C}_{g}$. However, this contradicts the fact that $\gamma$ only intersects $\partial \mathcal{T}_{g}^{\delta}$ in a discrete set of points. It follows that there can be no homotopy, proving Lemma \ref{Blem}.\\

It remains to show primitivity of the image of $q_{g}$.  By Lemma \ref{maintheorem}, the image of $q_{g}$ maps to an embedded sphere in the moduli space $\mathcal{M}_{g}$, representing the primitive homology class of the $\delta$-thin part of $\mathcal{M}_{g}$ that is killed by attaching the $(2g-1)$-handle corresponding to the $\Gamma_{g}$-orbit of critical points or sets of minima from Example \ref{Schmutzexample}. If the image of $q_{g}$ does not represent a primitive homology class, it is a multiple of a primitive homology class $[q_{g}^{prim}]$ with stabiliser subgroup $G_{C}^{prim}$ strictly containing $G_{C}$.\\

It follows from the Nielsen Realisation Theorem that every finite subgroup of $\Gamma_{g}$ has a fixed point set. This observation, together with the invariance of injectivity radius and the Weil-Petersson metric under the action of $\Gamma_{g}$, can be used to construct a ball $\mathcal{B}_{g}^{prim}$ in $\mathcal{T}_{g}^{\delta}$, invariant under $G_{C}^{prim}$ and with boundary projecting onto a representative of $[q_{g}^{prim}]$ in the boundary of the $\delta$-thick part of $\mathcal{M}_{g}$. \\

The fixed point set of $G_{C}^{prim}$ must be on $\gamma$. Since $G_{C}$ is the largest subgroup that fixes $\gamma$, the fixed point set of $G_{C}^{prim}$ must be a point $p_{fix}$ on $\gamma$. This is a point at which the images of $\gamma$ under the mapping classes in $G_{C}^{prim}\setminus G_{C}$ intersect. There is a $G_{C}$-equivariant homotopy of $\mathrm{Min}(C)$ along $\gamma$, taking the critical point $p$ to $p_{fix}$, whose restriction to $\partial \mathcal{T}_{g}^{\delta}$ is a $G_{C}$-equivariant homotopy. The image of $\mathrm{Min}(C)$ under this homotopy, call it $\mathrm{Min}(C)^{fix}$, is invariant under the action of $G_{C}$. The balls in the orbit of $\mathrm{Min}(C)^{fix}$ under the action of $G_{C}^{prim}$ intersect along the largest dimensional ball invariant under $G_{C}^{prim}$. However, this ball has dimension strictly less than $\mathrm{Min}(C)$, so its boundary cannot project to a  primitive homology class of the $\delta$-thin part of $\mathcal{M}_{g}$. This concludes the proof of primitivity, and hence of the theorem.

\end{proof}

\begin{corollary}
\label{kernel}
Denote by $Z_{g}$ the center of the mapping class group $\Gamma_{g}$. Then the stabiliser subgroup of the action of $\Gamma_{g}/Z_{g}$ on the homology class $[q_{g}]$ in $\mathrm{St}(S_{g})$ with representative the image of $q_{g}$ is isomorphic to the dihedral group $D_{2g+2}$. Moreover, the kernel of the action of $\Gamma_{g}/Z_{g}$ on $\mathrm{St}(S_{g})$ is trivial.
\end{corollary}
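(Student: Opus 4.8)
The plan is to deduce the corollary from the now-established Theorem \ref{blahblah} (equivalently Lemma \ref{Blem}) by carefully computing the stabiliser of the homology class $[q_g]$, and then checking that the center $Z_g$ is the only part of $\Gamma_g$ acting trivially on the whole module. Since $\mathrm{St}(\mathcal{S}_g)$ is cyclic modulo $\Gamma_g$ with generator $[q_g]$, the kernel of the action on $\mathrm{St}(\mathcal{S}_g)$ is contained in the kernel of the action on the cyclic submodule generated by $[q_g]$; and since the only ambiguity in ``stabilising $[q_g]$'' versus ``fixing $[q_g]$ up to sign'' is controlled by orientation, it will suffice to (i) identify the stabiliser of $[q_g]$ with $D_{2g+2}$, and (ii) show $D_{2g+2}$ acts on $\mathrm{St}(\mathcal{S}_g)$ with the subgroup acting trivially being exactly $Z_g$ (so trivial for $g\geq 3$, and equal to the image of the hyperelliptic-type involutions for $g=2$, which already act trivially on $\mathcal{C}_g$ hence are modded out in $\Gamma_2/Z_2$).

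\textbf{Step 1: the stabiliser contains $D_{2g+2}$.} The group $G_C$ from the proof of Theorem \ref{blahblah} is generated by a cyclic rotation of order $2g+2$ of $K_{2g+2}$ together with the reflections of $K_{2g+2}$; as a permutation group on the curves of $C$ this is the dihedral group $D_{2g+2}$, and its elements send $\mathrm{Min}(C)$ to itself, hence send the ball $\mathcal{B}_g$ to itself and the sphere $\partial\mathcal{B}_g$ (representing $[q_g]$ after projecting via $\pi$ to $\mathcal{C}_g$) to itself. I would note that these elements preserve the orientation of $\partial\mathcal{B}_g$: the rotation and reflections of $K_{2g+2}$ act on the triangulated sphere $\Phi_g$ (the boundary of the dual of the associahedron $\mathcal{K}_{2g-1}$) through a subgroup of its simplicial automorphisms, and one checks on the associahedron that this action is orientation-preserving on $S^{2g-2}$ — the cyclic generator acts with the right sign because $2g-2$ is even, and the reflections likewise; alternatively one argues via the fixed-point path $\gamma$, along which the local orientation is rigidly carried. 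Hence $D_{2g+2}$ fixes $[q_g]\in\mathrm{St}(\mathcal{S}_g)$ (not merely up to sign), so $D_{2g+2}\leq\mathrm{Stab}_{\Gamma_g/Z_g}([q_g])$.

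\textbf{Step 2: the stabiliser is no larger.} This is the main obstacle. Suppose $h\in\Gamma_g$ fixes $[q_g]$. The argument at the end of the proof of Theorem \ref{blahblah} already shows that any finite subgroup $G'$ of $\Gamma_g$ stabilising $[q_g]$ and strictly containing $G_C$ leads to a contradiction: its Nielsen-realisation fixed point lies on $\gamma$, must be a single point $p_{\mathrm{fix}}$ (as $G_C$ is the full stabiliser of the path $\gamma$), and the $G'$-orbit of the translated cell $\mathrm{Min}(C)^{\mathrm{fix}}$ then intersects in a cell of dimension strictly below $2g-1$, whose boundary cannot be a primitive — hence cannot be a nonzero multiple of the primitive — class. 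I would repackage exactly this argument: if $h$ fixes $[q_g]$ then $\langle G_C, h\rangle$ stabilises $[q_g]$; since $[q_g]$ generates a finite-rank-one submodule and the stabiliser of a nonzero vector in a $\Gamma_g$-module that is finitely generated mod $\Gamma_g$ must be finite (otherwise, by Theorem 4.2 of \cite{Broaddus}, $\mathrm{St}(\mathcal{S}_g)$ would be finite mod $\Gamma_g$, contradicting Harer's infinite-wedge result), $\langle G_C,h\rangle$ is finite, and the dimension-count contradiction forces $h\in G_C$. Thus $\mathrm{Stab}_{\Gamma_g/Z_g}([q_g])\cong D_{2g+2}$.

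\textbf{Step 3: triviality of the kernel.} Let $K$ be the kernel of the action of $\Gamma_g/Z_g$ on $\mathrm{St}(\mathcal{S}_g)$. Every element of $K$ fixes $[q_g]$, so $K\leq D_{2g+2}$ by Step 2. But an element of $D_{2g+2}=G_C$ acting trivially on all of $\mathrm{St}(\mathcal{S}_g)$ must in particular act trivially on $\mathcal{C}_g$ itself — indeed if a nontrivial $\gamma\in G_C$ moved some curve it would move some vertex of $\mathcal{C}_g$, and since $\mathcal{C}_g$ is connected of dimension $\geq 1$ for $g\geq 2$ one produces a homology class not fixed by $\gamma$ (concretely: $G_C$ permutes the $2g+2$ systoles of $C$ nontrivially, hence permutes the corresponding Broaddus-type spheres/cells nontrivially, so does not act as the identity on homology). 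For $g\geq 3$ the only element of $\Gamma_g$ acting trivially on $\mathcal{C}_g$ is the identity (the center is trivial by Theorem 3.10 of \cite{FandM}), so $K=1$; for $g=2$ the elements acting trivially on $\mathcal{C}_g$ are exactly the generators of $Z_2$ by Section 3.4 of \cite{FandM}, which are already trivial in $\Gamma_2/Z_2$, so again $K=1$. This proves both assertions of the corollary.
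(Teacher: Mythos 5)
Your Steps 1--2 line up with the paper's approach: the paper identifies the stabiliser with $\mathrm{Aut}(K_{2g+2})=D_{2g+2}$ via the associahedron, realizes each reflection by an explicit product of two surface reflections, and the ``no larger'' half is supplied (there, implicitly) by the primitivity argument you correctly repackage from the end of Theorem~\ref{blahblah}. Your added orientation check on $\Phi_g$ is a reasonable point that the paper leaves tacit.

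Step~3 is where you diverge from the paper, and there is a genuine gap. You assert that a nontrivial $\gamma\in D_{2g+2}=G_C$ acting trivially on $\mathrm{St}(\mathcal{S}_g)$ ``must in particular act trivially on $\mathcal{C}_g$ itself.'' That implication is unjustified: acting trivially on $\tilde H_{2g-2}(\mathcal{C}_g;\mathbb{Z})$ does not force a simplicial automorphism of $\mathcal{C}_g$ to be the identity. Your parenthetical attempt to justify it actually backfires: the Broaddus spheres obtained by applying elements of $G_C$ all represent the \emph{same} class $[q_g]$ (that is precisely what Step~1 established), so ``permutes the corresponding Broaddus-type spheres nontrivially'' does not yield a homology class that $\gamma$ fails to fix. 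Likewise, moving a vertex of $\mathcal{C}_g$ does not on its own produce a nonfixed class in $\tilde H_{2g-2}$. The paper's argument is different and avoids this: since the kernel of the action is normal, it lies in every conjugate $\phi\,D_{2g+2}\,\phi^{-1}$ of the stabiliser; conjugating by a high power of a pseudo-Anosov $\phi$ moves the curves fixed by the reflections far from the original fixed curves, so $D_{2g+2}\cap \phi\,D_{2g+2}\,\phi^{-1}$ is trivial, and hence so is the kernel. If you want to keep your structure, you should replace the ``acts trivially on $\mathcal{C}_g$'' claim by this conjugation argument (or an equivalent one showing that some $\Gamma_g$-translate of $[q_g]$ is not fixed by any nontrivial element of $G_C$).
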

\begin{proof}
Recall the construction of the simplicial $2g-2$ dimensional sphere $\Phi_{g}$ via the $(2g+2)$-gon $K_{2g+2}$. It follows that the automorphism group of the associahedron $\mathcal{K}_{2g-1}$ is isomorphic to the automorphism group of $K_{2g+2}$, namely the dihedral group $D_{2g+2}$. \\

\begin{figure}
\centering
\includegraphics[width=0.6\textwidth]{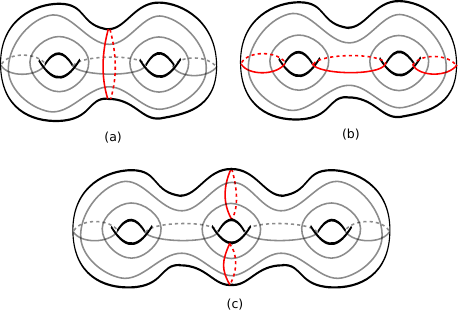}
\caption{In genus 2, the mapping class in the proof of Corollary \ref{kernel} is a composition of reflections through the red curve shown in part (a), followed by the red multicurve in part (b). When the genus is odd, the first reflection is through a multicurve with two connected components, as shown in part (c).}
\label{2reflections}
\end{figure}

The dihedral group $D_{2g+2}$ is generated by reflections. Let $r$ be one such reflection. Since $2g+2$ is even, $r$ leaves invariant a pair of vertices of $K_{2g+2}$; let $c_{1}$ and $c_{2}$ be the curves in $C$ labelling these edges. There is an element $\alpha(r)$ of the mapping class group that realises $r$ in the sense that the induced action of $\alpha(r)$ on the curves of $\mathcal{S}_{g}$ determines the same permutation of the elements of $C$ as the action of $r$ on the vertices of $K_{2g+2}$. The mapping class $\alpha(r)$ is a composition of two reflections of $\mathcal{S}_{g}$, as illustrated in Figure \ref{2reflections}. The first reflection is through a multicurve that intersects $c_{1}$ and $c_{2}$ twice each, and cuts each of the $(2g+2)$-gons in $S_{g}\setminus C$ into two $(g+2)$-gons. This multicurve is a curve when the genus is even, and has 2 connected components when the genus is odd, as illustrated in Figure \ref{2reflections} (a) and (c). The second reflection is through the set of curves obtained as the boundary curves of a subsurface obtained by gluing two adjacent $(2g+2)$-gons in $S_{g}\setminus C$ along their common edges as shown in Figure \ref{2reflections} (b). This concludes the proof of the first part of the Corollary.\\

The kernel of the action of $\Gamma_{g}/Z_{g}$ on $\mathrm{St}(S_{g})$ is contained in the intersection of the stabiliser subgroups of the generators, each of which can be taken to be in the $\Gamma_{g}$ orbit of a fixed $[q_{g}]$ by Theorem \ref{blahblah}. These stabiliser subgroups consist of all the conjugates of the stabiliser subgroup of $[q_{g}]$. This intersection can readily be seen to be trivial; consider for example the conjugate by a large power of a pseudo-Anosov. The intersection with the stabiliser subgroup of $[q_{g}]$ is zero, because the images of the reflections fix different curves. 

\end{proof}

\bibliography{spinebib2}
\bibliographystyle{plain}
\end{document}